\theoremstyle{plain}
\newtheorem{prop}{Proposition}
\newtheorem{lem}[prop]{Lemma}
\theoremstyle{definition}
\newtheorem{rem}[prop]{Remark}
\author{Brandon Williams }
\subjclass[2010]{11E41,11F27,11F30}
\address{Department of Mathematics \\ University of California \\ Berkeley, CA 94720}
\email{btw@math.berkeley.edu}
\begin{document}

\nocite{*}

\begin{abstract} For any number $m \equiv 0,1 \, (4)$ we correct the generating function of Hurwitz class number sums $\sum_r H(4n - mr^2)$ to a modular form (or quasimodular form if $m$ is a square) of weight two for the Weil representation attached to a binary quadratic form of discriminant $m$ and determine its behavior in the Petersson scalar product. This modular form arises through holomorphic projection of the zero-value of a nonholomorphic Jacobi Eisenstein series of index $1/m$. When $m$ is prime, we recover the classical Hirzebruch-Zagier series whose coefficients are intersection numbers of curves on a Hilbert modular surface. Finally we calculate certain sums over class numbers by comparing coefficients with an Eisenstein series. \end{abstract}

\title{Vector-valued Hirzebruch-Zagier series and class number sums}

\maketitle

\section{Introduction}

The \textbf{Hurwitz class numbers} $H(n)$ are essentially the class numbers of imaginary quadratic fields. To be more specific, if $-D$ is a fundamental discriminant then $$H(D) = \frac{2 h(D)}{w(D)}$$ where $h(D)$ is the class number of $\mathbb{Q}(\sqrt{-D})$ and $w(D)$ is the number of units in its ring of integers (in particular, $w(D) = 2$ for $D \ne 3,4$). More generally, $$H(n) = \frac{2h(D)}{w(D)} \sum_{d | f} \mu(d) \left( \frac{D}{d} \right) \sigma_1(f/d)$$ if $-n = Df^2$, where $D$ is the discriminant of $\mathbb{Q}(\sqrt{-n})$, and $\mu$ is the M\"obius function, $\sigma_1$ is the divisor sum, and $\left( \frac{\;}{\;} \right)$ is the Kronecker symbol; and by convention one sets $H(0) = -\frac{1}{12}$ and $H(n) = 0$ whenever $n \equiv 1,2 \, (\bmod \, 4)$. Hurwitz class numbers have natural interpretations in terms of equivalence classes of binary quadratic forms or orders in imaginary quadratic fields. We refer to section 5.3 of \cite{C} for more details. \\

Many identities are known to hold between Hurwitz class numbers, the prototypical identity being the Kronecker-Hurwitz relation $$\sum_{r \in \mathbb{Z}} H(4n - r^2) = \sum_{d | n} \max(d, n/d),$$ where we set $H(n) = 0$ if $n < 0$. These and other identities have interpretations in the theory of modular forms. The most influential result in this area is probably Hirzebruch and Zagier's discovery \cite{HZ} that for any prime $p \equiv 1 \, (\text{mod} \, 4)$ the sums $$H_p(n) = \sum_{4n - r^2 \equiv 0 \, (p)} H \left( \frac{4n - r^2}{p} \right)$$ can be corrected to the coefficients of a modular form of weight $2$ and level $\Gamma_0(p)$ and Nebentypus $\chi(n) = \left( \frac{p}{n} \right)$, and that these corrected coefficients can be interpreted as intersection numbers of curves on Hilbert modular surfaces. (The construction of the modular form there also goes through when $p$ is replaced by the discriminant of a real-quadratic number field.) The paper \cite{HZ} is a pioneering use of what are now called mock modular forms. Related techniques have turned out to be effective at deriving other identities among class numbers (among many other things); the papers \cite{BK2}, \cite{M2}, \cite{M1} are some examples of this. \\

As observed by Bruinier and Bundschuh \cite{BB}, there are isomorphisms between the spaces of vector-valued modular forms that transform with the Weil representation attached to a lattice of prime discriminant $p$ and a plus- or minus-subspace (depending on the signature of the lattice) of scalar modular forms of level $\Gamma_0(p)$ and Nebentypus. We prove here that up to a constant factor, the Hirzebruch-Zagier series of level $p$ mentioned above corresponds to a Poincar\'e square series of index $1/p$ (in the sense of \cite{W2}; see also section 2) by computing the latter series directly. One feature of this construction is that $p$ being prime or even a fundamental discriminant is irrelevant: the construction holds and produces a modular form attached to a quadratic form of discriminant $m$ for arbitrary $m \equiv 0,1 \, (\text{mod} \, 4)$ whose coefficients are corrections of the class number sums $\sum_r H(4n - mr^2)$. (However, if $m$ is a perfect square then it will produce a quasimodular form similar to the classical Eisenstein series of weight $2$, rather than a true modular form.) It seems natural to call these vector-valued functions \textbf{Hirzebruch-Zagier series} as well. \\

Our construction starts with a nonholomorphic vector-valued Jacobi Eisenstein series $E_{2,1/m,\beta}^*(\tau,z,s;Q)$ of weight $2$ and index $1/m$ whose Fourier coefficients involve the expressions $H(4n - mr^2)$. (Jacobi forms of fractional index are acceptable when the Heisenberg group also acts through a nontrivial representation.) The action through the Petersson scalar product of the value of the Jacobi Eisenstein series at $z=0$ is straightforward to describe using the usual unfolding argument (e.g. \cite{B}, section 1.2.2) for large enough $\mathrm{Re}(s)$, and it follows for all $s$ by analytic continuation. We construct the Hirzebruch-Zagier series by projecting the zero value $E_{2,1/m,\beta}^*(\tau,0,0;Q)$ orthogonally into the space of cusp forms and then adding the Eisenstein series; neither of these processes change the value of its Petersson scalar product with any cusp form, so this construction makes the behavior of the Hirzebruch-Zagier series with respect to the Petersson scalar product clear for arbitrary $m$. (In the case $m$ is prime, this was left as a conjecture at the end of \cite{HZ}). This method of constructing holomorphic modular forms from real-analytic forms is \textbf{holomorphic projection} and it remains valid for vector-valued modular forms (see also \cite{ORR}). \\

For small values of $m$, there are several examples where the Hirzebruch-Zagier series equals Bruinier's Eisenstein series of weight $2$. By comparing coefficients that are chosen to make the correction term in the Hirzebruch-Zagier series vanish, one can find several identities relating $\sum_r H(4n -mr^2)$ to a twisted divisor sum. A typical example is $$\sum_{r \in \mathbb{Z}} H(4n - 3r^2) = \frac{5}{6} \sigma_1(n,\chi_{12}), \; \; n \equiv 7 \, (\text{mod} \, 12),$$ where $\chi_m(n) = \left( \frac{m}{n} \right)$ is the Kronecker symbol and where $$\sigma_1(n,\chi_m) = \sum_{d | n} d \chi_m(n/d).$$ Additionally, by taking $m = d^2 \in \{4,9,25,49\}$ we give another derivation for identities involving sums of the form $\sum_{r \equiv a \, (d)} H(4n - r^2)$ which were considered in \cite{BK2}, \cite{six}. \\

\noindent \textbf{Acknowledgments:} I would like to thank Kathrin Bringmann and Ken Ono for their feedback on earlier drafts of this note.

\section{Background}

Let $Q$ be an integral nondegenerate quadratic form in $e$ variables with bilinear form $\langle x,y \rangle = \frac{Q(x+y) - Q(x) - Q(y)}{2}$. We associate to $Q$ a \textbf{discriminant form} $(A,Q)$ as follows: letting $S$ be the Gram matrix of $Q$ (that is, $S$ is symmetric with even diagonal and $Q(x) = \frac{1}{2} x^T S x$), we define $A$ as the finite group $A = (S^{-1} \mathbb{Z}^e) / \mathbb{Z}^e$. Then $Q$ induces a well-defined map $$Q : A \rightarrow \mathbb{Q}/\mathbb{Z}, \; Q(x) = \frac{1}{2}x^T S x \, \bmod \, \mathbb{Z}.$$

Attached to $(A,Q)$ is the \textbf{Weil representation} $\rho : Mp_2(\mathbb{Z}) \rightarrow \mathrm{Aut} \, \mathbb{C}[A]$ on the group ring $\mathbb{C}[A]$; letting $\mathfrak{e}_{\gamma},$ $\gamma \in A$ denote the natural basis, this is defined on the generators $S = ( \begin{pmatrix} 0 & -1 \\ 1 &0 \end{pmatrix}, \sqrt{\tau} )$ and $T = ( \begin{pmatrix} 1 & 1 \\ 0 & 1 \end{pmatrix}, 1)$ by $$\rho(S) \mathfrak{e}_{\gamma} = \frac{1}{\sqrt{|A|}} \mathbf{e}\Big( \mathrm{sig}(Q) / 8 \Big) \sum_{\beta \in A} \mathbf{e}\Big( \langle \gamma, \beta \rangle \Big) \mathfrak{e}_{\beta}$$ and $$\rho(T) \mathfrak{e}_{\gamma} = \mathbf{e}\Big( -Q(\gamma) \Big) \mathfrak{e}_{\gamma}.$$ (We are using the convention of \cite{Sch}; elsewhere the Weil representation may denote the dual of $\rho$.) Here $\mathrm{sig}(Q)$ denotes the signature of $Q$ and $\mathbf{e}(x) = e^{2\pi i x}$. This representation is unitary with respect to the scalar product on $\mathbb{C}[A]$ that makes $\mathfrak{e}_{\gamma}$, $\gamma \in A$, an orthonormal basis. Note that isomorphic discriminant forms produce the same Weil representation. If $\mathrm{sig}(Q)$ is even, then $\rho$ comes from a representation of $SL_2(\mathbb{Z})$; further, if $\mathrm{sig}(Q)$ is a multiple of $4$ then $\rho$ comes from a representation of $PSL_2(\mathbb{Z})$. \\

By $M_k(Q)$, $k \in \frac{1}{2}\mathbb{Z}$, we denote the space of \textbf{modular forms of weight $k$ for the Weil representation attached to $Q$}, which are the holomorphic functions $f : \mathbb{H} \rightarrow \mathbb{C}[A]$ that satisfy $$f\Big( \frac{a \tau + b}{c \tau + d} \Big) = (c \tau + d)^k \rho(M) f(\tau), \; \; M = (\begin{pmatrix} a & b \\ c & d \end{pmatrix}, \sqrt{c \tau + d}) \in Mp_2(\mathbb{Z}),$$ where $(c \tau + d)^k = (\sqrt{c \tau + d})^{2k}$ if $k$ is half-integral, together with a growth condition at $\infty$ that is equivalent to $f$ having a Fourier expansion of the form $$f(\tau) = \sum_{\gamma \in A} \sum_{\substack{n \in \mathbb{Z} - Q(\gamma) \\ n \ge 0}} c(n,\gamma) q^n \mathfrak{e}_{\gamma}, \; \; c(n,\gamma) \in \mathbb{C}.$$ The first condition can be abbreviated by $f|_{k,\rho} M = f$ for all $M \in Mp_2(\mathbb{Z})$, where $|_{k,\rho}$ is the Petersson slash operator $$f |_{k,\rho} M (\tau) = (c \tau + d)^{-k} \rho(M)^{-1} f(\tau).$$ We call $f$ a \textbf{cusp form} if in addition all constant terms $c(0,\gamma)$ are zero. \\

\begin{rem} Following proposition 4.5 of \cite{Sch}, if $N$ is the level of the discriminant form $(A,Q)$ then in the case of even signature the action of $\Gamma_0(N)$ through the Weil representation is $$\rho(M) \mathfrak{e}_{\gamma} = \chi(M) \mathbf{e}\Big( -bd Q(\gamma) \Big) \mathfrak{e}_{d \gamma}, \; \; M = \begin{pmatrix} a & b \\ c & d \end{pmatrix} \in SL_2(\mathbb{Z}), \; \; c \equiv 0 \, (N).$$ Here $\chi$ is a quadratic character naturally associated to the quadratic form $Q$. It follows from this that if $f(\tau) = \sum_{\gamma \in A} f_{\gamma}(\tau) \mathfrak{e}_{\gamma}$ is a modular form for the Weil representation attached to $Q$ then $$\sum_{\gamma \in A} f_{\gamma}(N \tau)$$ is a (scalar-valued) modular form of level $\Gamma_0(N)$ and Nebentypus $\chi$. In the case $\mathrm{sig}(Q) \equiv 0 \, (\text{mod} \, 4)$ and $\mathrm{discr}(Q) = p$ is prime (and therefore the level of $Q$ is also $p$), it was shown in \cite{BB} that the character is $\chi(n) = \left( \frac{p}{n} \right)$ and that this correspondence is an isomorphism to the ``plus space" of modular forms with Nebentypus in which the coefficient of $q^n$ is zero if $\chi(n) = -1$.
\end{rem}

For $k \ge 5/2$, the \textbf{Eisenstein series} of weight $k$ attached to $Q$ is defined by $$E_k(\tau;Q) = \sum_{M \in \Gamma_{\infty} \backslash Mp_2(\mathbb{Z})} \mathfrak{e}_0 \Big|_{k,\rho} M,$$ where $\mathfrak{e}_0$ is interpreted as a constant function and $\Gamma_{\infty}$ is the subgroup of $Mp_2(\mathbb{Z})$ generated by $T$ and $Z = ( \begin{pmatrix} -1 & 0 \\ 0 & -1 \end{pmatrix}, i).$ Note that this construction is identically zero unless $2k + \mathrm{sig}(Q) \equiv 0 \, (\text{mod} \, 4).$ When $k \le 2$, this no longer converges (absolutely); however, one can instead consider the nonholomorphic Eisenstein series $$E_k^*(\tau,s;Q) = \sum_{M \in \Gamma_{\infty} \backslash Mp_2(\mathbb{Z})} (y^s \mathfrak{e}_0) \Big|_{k,\rho} M,$$ which has an analytic continuation in $s$ beyond the region of convergence. These series were considered in \cite{W3}; in weight $k = 3/2$, setting $s=0$ results in a harmonic Maass form $E_{3/2}^*(\tau,0;Q)$ whose Fourier expansion takes the form $$E_{3/2}^*(\tau,0;Q) = \sum_{\gamma \in A} \sum_{\substack{n \in \mathbb{Z} - Q(\gamma) \\ n \ge 0}} c(n,\gamma) q^n \mathfrak{e}_{\gamma} + \frac{1}{\sqrt{y}} \sum_{\substack{n \in \mathbb{Z} - Q(\gamma) \\ n \le 0}} a(n,\gamma) \beta(-4\pi n y) q^n \mathfrak{e}_{\gamma},$$ where $c(n,\gamma), a(n,\gamma) \in \mathbb{C}$ and $\beta$ is a special function (essentially an incomplete Gamma function) $$\beta(x) = \frac{1}{16\pi} \int_1^{\infty} u^{-3/2} e^{-xu} \, \mathrm{d}u.$$ When $k=2$, the zero-value $E_2^*(\tau,0;Q)$ is a quasimodular form whose Fourier expansion takes the form $$E_2^*(\tau,0;Q) = \sum_{\gamma \in A} \sum_{\substack{n \in \mathbb{Z} - Q(\gamma) \\ n \ge 0}} c(n,\gamma) q^n \mathfrak{e}_{\gamma} + \frac{1}{y} \cdot \mathrm{const}.$$ (The classical weight $2$ Eisenstein series $E_2^*(\tau) = 1 - 24 \sum_{n=1}^{\infty} \sigma_1(n) q^n - \frac{3}{\pi y}$ is a special case of this.) \\

The most important example of a weight $3/2$ Eisenstein series is the Zagier Eisenstein series, which we consider in the form $$E_{3/2}^*(\tau,0;x^2) = -12 \sum_{\gamma \in \frac{1}{2} \mathbb{Z}/\mathbb{Z}} \sum_{n \in \mathbb{Z} - \gamma^2} H(4n) q^n \mathfrak{e}_{\gamma} + \frac{1}{\sqrt{y}} \sum_{\gamma \in \frac{1}{2} \mathbb{Z}/\mathbb{Z}} \sum_{n \in \mathbb{Z} - \gamma^2} a(n,\gamma) \beta(-4\pi n y) q^n \mathfrak{e}_{\gamma}.$$ Here $H(n)$ is the \textbf{Hurwitz class number} from the introduction, and $a(0,0) = -24$; $a(n,\gamma) = -48$ if $-n$ is a rational square and $a(n,\gamma) = 0$ otherwise. \\

We will also need to consider Jacobi forms for the ``twisted Weil representations'' described in \cite{W1}. The Heisenberg group is the group $\mathcal{H}$ with underlying set $\mathbb{Z}^3$ and group action $$(\lambda_1,\mu_1,t_1) \cdot (\lambda_2, \mu_2, t_2) = (\lambda_1+\lambda_2, \mu_1 + \mu_2, t_1 + t_2 + \lambda_1 \mu_2 - \lambda_2 \mu_1).$$ The metaplectic group $Mp_2(\mathbb{Z})$ acts on $\mathcal{H}$ from the right by $$(\lambda, \mu,t) \cdot \begin{pmatrix} a & b \\ c & d \end{pmatrix} = (a \lambda + c \mu, b \lambda + d\mu, t),$$ and the meta-Jacobi group is defined as the semidirect product $\mathcal{J} = \mathcal{H} \rtimes Mp_2(\mathbb{Z})$ by this action. For any $\beta \in A$, there is a unitary \textbf{Schr\"odinger representation} $\sigma_{\beta}$ of $\mathcal{H}$ on $\mathbb{C}[A]$ given by $$\sigma_{\beta}(\lambda, \mu,t) \mathfrak{e}_{\gamma} = \mathbf{e}\Big(-\mu \langle \beta, \gamma \rangle + (\lambda \mu - t) Q(\beta) \Big) \mathfrak{e}_{\gamma - \lambda \beta}.$$ (This is the dual of the representation discussed in \cite{W1}.) This representation is compatible with the Weil representation and one obtains a representation $$\rho_{\beta} : \mathcal{J} \rightarrow \mathrm{Aut} \, \mathbb{C}[A]$$ that restricts to $\sigma_{\beta}$ on $\mathcal{H}$ and to $\rho$ on $Mp_2(\mathbb{Z})$. \\

A \textbf{Jacobi form} of weight $k$ and index $m \in \mathbb{Z} - Q(\beta)$ for the representation $\rho_{\beta}$ is a holomorphic function of two variables $\Phi : \mathbb{H} \times \mathbb{C} \rightarrow \mathbb{C}[A]$ with the following properties: \\ (i) For any $M = (\begin{pmatrix} a & b \\ c & d \end{pmatrix}, \sqrt{c \tau + d} ) \in Mp_2(\mathbb{Z})$, $$\Phi \Big( \frac{a \tau + b}{c \tau +d}, \frac{z}{c \tau + d} \Big) = (c \tau + d)^k \mathbf{e}\Big( \frac{mcz^2}{c \tau + d} \Big) \rho(M) \Phi(\tau,z);$$ (ii) For any $(\lambda,\mu,t) \in \mathcal{H}$, $$\Phi(\tau,z+\lambda \tau + \mu) = \mathbf{e}\Big( - m\lambda^2 \tau - 2m \lambda z - m(\lambda \mu + t) \Big) \sigma_{\beta}(\lambda,\mu,t) \Phi(\tau,z);$$ (iii) The Fourier series of $\Phi$, $$\Phi(\tau,z) = \sum_{\gamma \in A} \sum_{n \in \mathbb{Z} - Q(\gamma)} \sum_{r \in \mathbb{Z} - \langle \gamma, \beta \rangle} c(n,r,\gamma) q^n \zeta^r \mathfrak{e}_{\gamma}, \; \; q = \mathbf{e}(\tau), \; \zeta = \mathbf{e}(z)$$ has $c(n,r,\gamma) = 0$ whenever $n < r^2 / 4m$. \\

Jacobi forms as above arise naturally through the Fourier-Jacobi expansion of vector-valued Siegel modular forms. If $X \begin{pmatrix} \tau_1 & z \\ z & \tau_2 \end{pmatrix}$ is a Siegel modular form of weight $k$ that transforms with respect to the Weil representation of $Mp(4;\mathbb{Z})$ on $\mathbb{C}[A] \otimes \mathbb{C}[A]$ (which encodes the transformation of the genus two Siegel theta function of $-Q$, if $Q$ is negative definite), one can write out the Fourier series of $X$ with respect to the variable $\tau_2$ as $$X \begin{pmatrix} \tau_1 & z \\ z & \tau_2 \end{pmatrix} = \sum_{\beta \in A} \sum_{m \in \mathbb{Z} - Q(\beta)} \Phi_{m,\beta}(\tau_1,z) \otimes q_2^m \mathfrak{e}_{\beta}, \; \; q_2 = \mathbf{e}(\tau_2),$$ and $\Phi_{m,\beta}$ is a Jacobi form of weight $k$ and index $m$ for $\rho_{\beta}$. \\

One particularly important Jacobi form is the Jacobi Eisenstein series $$E_{k,m,\beta}(\tau,z;Q) = \sum_{(M,\zeta) \in \mathcal{J}_{\infty} \backslash \mathcal{J}} \mathfrak{e}_0 \Big|_{k,m,\rho_{\beta}} (M,\zeta), \; \; k \ge 3;$$ here, $|_{k,m,\rho_{\beta}}$ is a Petersson slash operator defined similarly to $|_{k,\rho}$, and $\mathcal{J}_{\infty}$ is the stabilizer of $\mathfrak{e}_0$ in $\mathcal{J}$. One can extend this to smaller weights by defining $E_{k,m,\beta}^*(\tau,z,s;Q) = \sum_{(M,\zeta)} (y^s \mathfrak{e}_0) |_{k,m,\rho_{\beta}} (M,\zeta).$ When $k=2$, the value at $s=0$ has a Fourier expansion of the form \begin{align*} E_{2,m,\beta}^*(\tau,z,0;Q) &= \sum_{\gamma \in A} \sum_{n \in \mathbb{Z} - Q(\gamma)} \sum_{r \in \mathbb{Z} - \langle \gamma, \beta \rangle} c(n,r,\gamma) q^n \zeta^r \mathfrak{e}_{\gamma} + \\ &\quad\quad\quad\quad + \frac{1}{\sqrt{y}} \sum_{\gamma \in A} \sum_{n \in \mathbb{Z} - Q(\gamma)} \sum_{r \in \mathbb{Z} - \langle \gamma, \beta \rangle} A(n,r,\gamma) \beta(\pi y (r^2 / m - 4n)) q^n \zeta^r \mathfrak{e}_{\gamma},\end{align*} where $A(n,r,\gamma)$ are constants that are zero whenever $(r^2 - 4mn) |A|$ is not a rational square. \\

For $k \ge 3$, the zero-values $Q_{k,m,\beta}(\tau;Q) = E_{k,m,\beta}(\tau,0;Q)$ are easily seen to be modular forms for $\rho$. As observed in \cite{W1} they have an interesting characterization in terms of the Petersson scalar product $$(f,g) = \int_{SL_2(\mathbb{Z}) \backslash \mathbb{H}} \langle f(\tau), g(\tau) \rangle y^{k-2} \, \mathrm{d}x \, \mathrm{d}y, \; \; \tau = x + iy;$$ namely, $Q_{k,m,\beta}(\tau;Q) - E_k(\tau;Q)$ is a cusp form and $$(f,Q_{k,m,\beta}) = \frac{2 \cdot \Gamma(k-1)}{(4m \pi)^{k-1}} \sum_{\lambda=1}^{\infty} \frac{c(\lambda^2 m, \lambda \beta)}{\lambda^{2k-2}}$$ for all cusp forms $f(\tau) = \sum_{\gamma,n} c(n,\gamma) q^n \mathfrak{e}_{\gamma}.$ This characterization was essentially taken as a definition for $Q_{k,m,\beta}$ in weights $k=3/2,2,5/2$ in \cite{W1}, \cite{W3}; in particular, in weight $k=2$ one can obtain $Q_{2,m,\beta}$ through holomorphic projection of the zero-value $E_{2,m,\beta}^*(\tau,0,0;Q)$. \\

\section{The case $m \equiv 1 \, \bmod \, 4$}

Fix any number $m \equiv 1 \, \bmod \, 4$ and consider the quadratic form $Q(x,y) =  x^2 + xy - \frac{m-1}{4} y^2$ of discriminant $m.$ There is a unique pair of elements $\pm \beta \in A$ in the associated discriminant form with $Q(\beta) = 1 - \frac{1}{m}$; they are represented by $\pm (-1/m, 2/m)$. (In particular, the discriminant form is cyclic and these elements are generators, and $Q$ takes values in $\frac{1}{m} \mathbb{Z} / \mathbb{Z}$.) We will also consider the ternary quadratic form $$\mathbf{Q}(x,y,z) = Q(x,y) + 2xz + z^2,$$ which has discriminant $$\mathrm{discr}(\mathbf{Q}) = \mathrm{det} \begin{pmatrix} 2 & 1 & 2 \\ 1 & -\frac{m-1}{2} & 0 \\ 2 & 0 & 2 \end{pmatrix} = -2.$$

Comparing the coefficient formulas for the Jacobi Eisenstein series of index $1/m$ (\cite{W2}, section 3) and the usual Eisenstein series \cite{BK} (which in weight $3/2$ was considered in \cite{W3}), we see that the coefficient of $q^n \zeta^r$ in $E_{2,1/m,\beta}^*(\tau,z,s;Q)$ equals the coefficient of $q^{n - mr^2 / 4}$ in $E_{3/2}^*(\tau,s;\mathbf{Q})$. To be more precise, we should consider the coefficients of $q^n \zeta^r \mathfrak{e}_{\gamma}$ for elements $\gamma \in A$ instead; however, the condition $r \in \mathbb{Z} - \langle \gamma, \beta \rangle$ determines $\langle \gamma, \beta \rangle \in \mathbb{Q}/\mathbb{Z}$, and due to our choice of $\beta$ this determines $\gamma$ uniquely. Both coefficient formulas involve zero-counts of quadratic polynomials modulo prime powers and $\mathbf{Q}$ is chosen to make these zero-counts equal; specifically, for all $\gamma \in A$ and $n \in \mathbb{Z} - Q(\gamma)$, $r \in \mathbb{Z} - \langle \gamma, \beta \rangle$, \begin{align*} &\;\,\quad \# \{(v,\lambda) \in \mathbb{Z}^3 \, \text{mod} \, p^k: \; Q(v + \lambda \beta - \gamma) + \lambda^2 / m - r \lambda + n \equiv 0 \} \\ &= \#\{v \in \mathbb{Z}^3 \, \text{mod} \, p^k: \; \mathbf{Q}(v - \gamma_r) + (n - mr^2 / 4) \equiv 0 \}, \; \; \gamma_r = \Big(\gamma - \frac{rm}{2} \beta, \frac{rm}{2}\Big) \in (\mathbb{Q}/\mathbb{Z})^3, \end{align*} where $\gamma_r$ is in the discriminant group of $\mathbf{Q}$ and $n - \frac{mr^2}{4} \in \mathbb{Z} - \mathbf{Q}(\gamma_r)$. On the level of matrices, letting $S = \begin{pmatrix} 2 & 1 \\ 1 & -\frac{m-1}{2} \end{pmatrix}$ be the Gram matrix of $Q$, this follows because the Gram matrix of $\mathbf{Q}$ has block form $$\mathrm{Gram}(\mathbf{Q}) = \begin{pmatrix} 2& 1 & 2 \\ 1 & -\frac{m-1}{2} & 0 \\ 2 & 0 & 2 \end{pmatrix} =  \begin{pmatrix} S & S \beta \\ \beta^T S & 2(\frac{1}{m} + \beta^T S \beta) \end{pmatrix}$$ for the representative $\beta = (\frac{m-1}{m}, \frac{2}{m}) \in \mathbb{Q}^2$. (See also Remark 17 of \cite{W1}.) \\

 Since $|\mathrm{discr}(\mathbf{Q})| = 2$ and $\mathrm{sig}(\mathbf{Q}) = 1$, the discriminant form of $\mathbf{Q}$ is isomorphic to that of $x^2$ and so the nonholomorphic weight $3/2$ Eisenstein series attached to it is the Zagier Eisenstein series in which the coefficient of $q^n$ is $H(4n)$. Evaluating at $s=0$ and using the previous paragraph, we find \begin{equation} \begin{split} E_{2,1/m,\beta}^*(\tau,z,0;Q) &= -12 \sum_{\gamma \in A} \sum_{n \in \mathbb{Z} - Q(\gamma)} \sum_{r \in \mathbb{Z} - \langle \gamma, \beta \rangle} H(4n - mr^2) q^n \zeta^r \mathfrak{e}_{\gamma} + \\ &\quad\quad\quad\quad + \frac{1}{\sqrt{y}} \sum_{\gamma \in A} \sum_{n \in \mathbb{Z} - Q(\gamma)} \sum_{r \in \mathbb{Z} - \langle \gamma, \beta \rangle} A(n,r,\gamma) \beta(\pi y (mr^2 - 4n))  q^n \zeta^r \mathfrak{e}_{\gamma}, \end{split} \end{equation} where the coefficients $A(n,r,\gamma)$ are given by $$ A(n,r,\gamma) = \begin{cases} -24: & mr^2 = 4n; \\ -48: & mr^2 - 4n \; \text{is a nonzero square}; \\ 0: & \text{otherwise}. \end{cases}$$

The main result of \cite{W2} is a coefficient formula for the Poincar\'e square series $Q_{2,d,\beta}(\tau)$ for any $\beta \in A$ and $d \in \mathbb{Z} - Q(\beta)$. If $$E_{2,d,\beta}^*(\tau,0,0;Q) = \sum_{\gamma \in A} \sum_{n \in \mathbb{Z} - Q(\gamma)} \sum_{r \in \mathbb{Z} - \langle \gamma, \beta \rangle} c(n,r,\gamma) q^n \zeta^r \mathfrak{e}_{\gamma} + \frac{1}{\sqrt{y}} \sum_{\gamma \in A} \sum_{n \in \mathbb{Z} - Q(\gamma)} \sum_{r \in \mathbb{Z} - \langle \gamma, \beta \rangle} A(n,r,\gamma) \beta(\pi y (r^2 / d - 4n)) q^n \zeta^r \mathfrak{e}_{\gamma},$$ then the coefficient $C(n,\gamma)$ of $q^n \mathfrak{e}_{\gamma}$ in $Q_{2,d,\beta}$ is $$C(n,\gamma) = \sum_{r \in \mathbb{Z} - \langle \gamma, \beta \rangle} c(n,r,\gamma) + \frac{1}{8 \sqrt{d}} \sum_{r \in \mathbb{Z} - \langle \gamma, \beta \rangle} \Big[ A(n,r,\gamma) \Big( |r| - \sqrt{r^2 - 4dn} \Big) \Big].$$ In our case where $d = 1/m$, these coefficients are \begin{equation} -12 \sum_{r \in \mathbb{Z} - \langle \gamma, \beta \rangle} H(4n - mr^2) - 6\sqrt{m} \sum_{\substack{r \in \mathbb{Z} - \langle \gamma, \beta \rangle \\ mr^2 - 4n = \square}} \Big( |r| - \sqrt{r^2 - 4n/m} \Big) + \begin{cases} 12 \sqrt{n}: & \exists r \in \mathbb{Z} - \langle \gamma, \beta \rangle \; \text{with} \; mr^2 = 4n; \\ 0: & \text{otherwise}. \end{cases} \end{equation}

Note that $\frac{m}{2} \Big( |r| - \sqrt{r^2 - 4n/m} \Big)$ is always an algebraic integer when $mr^2 - 4n$ is square: if $m$ itself is square then this is clear, and otherwise its conjugate is $\frac{m}{2} \Big( |r| + \sqrt{r^2 - 4n/m} \Big)$, so its trace is $m|r| \in m \mathbb{Z} \pm m \langle \gamma, \beta \rangle \subseteq \mathbb{Z}$ and its norm is $mn \in m \mathbb{Z} - m Q(\gamma) \subseteq \mathbb{Z}.$ Conversely, if $m$ is squarefree then $$\frac{m}{2} \Big( |r| - \sqrt{r^2 - 4n/m} \Big), \; \; r \in \mathbb{Z} \pm \langle \gamma, \beta \rangle, \; \gamma \in A$$ runs exactly through the values taken by $\min(\lambda, \overline{\lambda})$, where $\lambda$ is a positive integer of $\mathcal{O}_K$, $K = \mathbb{Q}(\sqrt{m})$ with positive conjugate $\overline{\lambda}$ that has norm $mn$; however it double-counts $\sqrt{mn}$ if $r = \pm \sqrt{4n/m}$ occur in the sum. This allows one to remove the additional term $12\sqrt{n}$ in the formula (2). In some sense this remains true for $m = 1$ (with trivial discriminant form): then $\frac{1}{2} (|r| - \sqrt{r^2 - 4n})$, $r \in \mathbb{Z}$ takes the values $\min(d,n/d)$ where $d$ runs through divisors of $n$ in $\mathbb{Z}$ but it double-counts $\sqrt{n}$ if $n$ is square.) \\

In this way, we obtain for $m=1$ an identity equivalent to the Kronecker-Hurwitz relations: $$E_2(\tau) = 1 - 24 \sum_{n=1}^{\infty} \sigma_1(n) q^n = Q_{2,1,0}(\tau) = 1 -12 \sum_{n \in \mathbb{Z}} \Big( \sum_{d | n} \min(d,n/d) + \sum_{r \in \mathbb{Z}} H(4n - r^2) \Big) q^n,$$ whereas if $m = p$ is a prime, we obtain a vector-valued form of the Hirzebruch-Zagier series: $$Q_{2,1/p,\beta}(\tau) = 1 - 12 \sum_{\gamma \in A} \varepsilon_{\gamma} \sum_{n \in \mathbb{Z} - Q(\gamma)} \Big( \sum_{r \in \mathbb{Z} - \langle \gamma, \beta \rangle} H(4n - pr^2) + \frac{1}{\sqrt{p}} \sum_{\substack{ \lambda \in \mathcal{O}_K \\ \lambda \gg 0 \\ \lambda \overline{\lambda} = pn}} \min(\lambda, \overline{\lambda}) \Big) q^n \mathfrak{e}_{\gamma},$$ where $K = \mathbb{Q}(\sqrt{p})$; $\overline{\lambda}$ is the conjugate of $\lambda$; $\lambda \gg 0$ means that both $\lambda, \overline{\lambda}$ are positive; and finally we set $\varepsilon_{\gamma} = 1$ if $\gamma = 0$ and $\varepsilon_{\gamma} = 1/2$ otherwise. (The factors $\varepsilon_{\gamma}$ come from the fact that relating the sum over $r$ to a divisor sum requires both congruences $r \in \mathbb{Z} \pm \langle \gamma, \beta \rangle$; but the coefficients $c(n,\gamma)$ of any modular form satisfy $c(n,\gamma) = c(n,-\gamma)$ by our assumption $2k + \mathrm{sig}(Q) \equiv 0 \, (4)$ on their weight.) As mentioned in Remark 1 the identification between modular forms attached to quadratic forms of prime discriminant and the plus space of modular forms with Nebentypus is essentially given by summing together all components and replacing $n$ by $n/p$ in the coefficient formula; it is not difficult to see that the image of $Q_{2,1/p,\beta}$ under this identification is $$-12 \varphi_p = 1 - 12 \sum_{n \in \mathbb{Z}} \Big[ \sum_{\substack{r \in \mathbb{Z} \\ 4n - r^2 \equiv 0 \, (p)}} H\Big( \frac{4n - r^2}{p} \Big) + \frac{1}{\sqrt{p}} \sum_{\substack{\lambda \in \mathcal{O}_K \\ \lambda \gg 0 \\ \lambda \overline{\lambda} = n}} \min(\lambda, \overline{\lambda}) \Big] q^n \in M_2( \Gamma_0(p), \chi),$$ where $\varphi_p$ is the function of Hirzebruch and Zagier's paper \cite{HZ}.

\section{The case $m \equiv 0 \, \bmod \, 4$}

Our procedure in this case is nearly the same, but we consider instead the quadratic form $Q(x,y) = x^2 - \frac{m}{4} y^2$ of discriminant $m$. There is again an element $\beta \in A$ with $Q(\beta) = 1 - \frac{1}{m};$ in this case, one can choose the representative $(0,2/m).$ (Note that this discriminant form is not cyclic. Also, $\beta$ is not necessarily unique; but any other choice of $\beta$ will give a similar result.) We also consider the ternary quadratic form $\mathbf{Q}(x,y,z) = Q(x,y) - yz$ which has discriminant $$\mathrm{discr}(\mathbf{Q}) = \mathrm{det} \begin{pmatrix} 2 & 0 & 0 \\ 0 & -m/2 & -1 \\ 0 & -1 & 0 \end{pmatrix} = -2.$$ Comparing coefficient formulas between $E_2^*(\tau,z,s;Q)$ and $E_{3/2}^*(\tau;\mathbf{Q})$ gives exactly the same formula as equation (1) in the previous section: \begin{align*} E_{2,1/m,\beta}^*(\tau,z,0;Q) &= -12 \sum_{\gamma \in A} \sum_{n \in \mathbb{Z} - Q(\gamma)} \sum_{r \in \mathbb{Z} - \langle \gamma, \beta \rangle} H(4n - mr^2) q^n \zeta^r \mathfrak{e}_{\gamma} + \\ &\quad\quad\quad\quad + \frac{1}{\sqrt{y}} \sum_{\gamma \in A} \sum_{n \in \mathbb{Z} - Q(\gamma)} \sum_{r \in \mathbb{Z} - \langle \gamma, \beta \rangle} A(n,r,\gamma) \beta(\pi y (m r^2 - 4n)) q^n \zeta^r \mathfrak{e}_{\gamma}, \end{align*} and therefore the same coefficient formula from equation (2) produces a modular form for $Q$.

\section{Formulas for class number sums}

In this section we compute values of $m$ where the relevant space of weight $2$ modular forms is one-dimensional and therefore the Hirzebruch-Zagier series $Q_{2,1/m,\beta}$ equals the Eisenstein series. We obtain formulas for class number sums by considering those exponents $n$ for which the corrective term $$-6\sqrt{m} \sum_{mr^2 - 4n = \square} (|r| - \sqrt{r^2 - 4n/m})$$ above vanishes. First we will list the numbers $m$ for which the cusp space $S_2(Q)$ attached the quadratic forms we considered above vanishes.

\begin{lem} (i) Suppose $m \equiv 1$ mod $4$ and let $Q(x,y) = x^2 + xy - \frac{m-1}{4} y^2$. Then the cusp space $S_2(Q)$ vanishes if and only if $m \le 25$. \\ (ii) Suppose $m \equiv 0$ mod $4$ and let $Q(x,y) = x^2 - \frac{m}{4} y^2$. Then the cusp space $S_2(Q)$ vanishes if and only if $m \le 20$.
\end{lem}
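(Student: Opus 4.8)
The plan is to reduce each statement to the vanishing of an explicit dimension count and then evaluate it. In both cases the binary form is indefinite with $\det S = -m$, so $\mathrm{sig}(Q) = 0$, the discriminant form has order $|A| = m$, and $\rho$ factors through $SL_2(\mathbb{Z})$; the weight $k = 2$ satisfies the consistency condition $2k + \mathrm{sig}(Q) = 4 \equiv 0 \pmod 4$. Since $S_2(Q) = 0$ is equivalent to $\dim S_2(Q) = 0$, and $\dim S_2(Q) = \dim M_2(Q) - \dim \mathcal{E}_2(Q)$ where $\mathcal{E}_2(Q)$ is the weight-two Eisenstein subspace, the whole lemma becomes a computation of these dimensions as a function of $m$. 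I would note at the outset that for square $m$ the space $\mathcal{E}_2(Q)$ picks up the extra quasimodular contribution coming from $E_2^*(\tau,0;Q)$ (as in the parenthetical remarks of the introduction), so the Eisenstein count must be handled separately in that case.

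The main step is to invoke the Riemann--Roch/dimension formula for vector-valued modular forms of weight $2$ attached to the Weil representation (for instance the formula used in \cite{BEF}). This writes $\dim M_2(Q)$ as a main term proportional to $|A| = m$, corrected by the elliptic Gauss sums $\mathrm{tr}\,\rho(S)$ and $\mathrm{tr}\,\rho(ST)$ together with a cusp term assembled from the eigenvalues $\mathbf{e}(-Q(\gamma))$ of $\rho(T)$. For the two families at hand all of these are explicit. In case (i) the group $A$ is cyclic, generated by $\beta$ with $Q(k\beta) \equiv -k^2/m \pmod 1$, so $\mathrm{tr}\,\rho(S)$ is (up to the eighth root of unity $\mathbf{e}(\mathrm{sig}(Q)/8) = 1$) the classical quadratic Gauss sum $\tfrac{1}{\sqrt m}\sum_{k} \mathbf{e}(-2k^2/m)$, and the cusp data is controlled by the number of $k$ with $k^2 \equiv 0$ and the distribution of $k^2 \bmod m$; in case (ii) the form splits as an orthogonal sum, so its Gauss sums factor and can be read off the same way. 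Evaluating these produces a closed-form arithmetic expression for $\dim S_2(Q)$ depending only on $m$ modulo small integers and on whether $m$ is a perfect square. As a consistency check, for prime $m = p$ one can compare with $\dim S_2^+(\Gamma_0(p),\chi_p)$ via the isomorphism of Remark 1.

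With the closed form in hand the two directions of the ``if and only if'' follow together: the leading term grows linearly in $m$ while the correction terms are $O(\sqrt m)$, so there is an explicit bound $M_0$ beyond which $\dim S_2(Q) > 0$; for the finitely many $m$ with $m \le M_0$ one evaluates the expression directly and reads off that the vanishing locus is exactly $m \le 25$ in case (i) and $m \le 20$ in case (ii).

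I expect the main obstacle to be the uniform, exact evaluation of the Gauss-sum and cusp contributions for both families --- in particular the non-cyclic discriminant form of case (ii) and the split between square and non-square $m$ --- together with the correct bookkeeping of the weight-two Eisenstein subspace when $m$ is a square. Near the thresholds the correction terms are of the same order of magnitude as the main term, so an error of $\pm 1$ anywhere changes the answer; the delicate part is therefore not the asymptotic positivity but pinning down the constant and lower-order terms precisely enough to certify the exact crossover at $m = 25$ and $m = 20$.
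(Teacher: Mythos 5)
Your approach is genuinely different from the paper's, whose entire proof is a table lookup: Table~7 of \cite{BEF} lists all discriminant forms of signature $0 \bmod 8$ with at most four generators whose space of weight-two cusp forms vanishes, so the author only has to identify for which $m$ the discriminant form of $Q$ appears in that table. Your plan --- evaluate the Riemann--Roch dimension formula for $\dim S_2(Q)$ in closed form, show the main term dominates the corrections for large $m$, and check the finitely many remaining $m$ directly --- is essentially the computation from which such a table is compiled, so it is self-contained but much longer. Two steps in your sketch are not automatic and are exactly where the difficulty concentrates. First, it is not true that all correction terms are $O(\sqrt m)$: the Gauss-sum terms, normalized by $|A|^{-1/2}$, are in fact $O(1)$, but the dimension formula also contains a contribution $\tfrac12\sum_{\gamma\in A} B(Q(\gamma))$ with $B(x)=x-\tfrac12$ for $x\in(0,1)$, a sum of $m$ terms each of size up to $\tfrac12$, hence a priori of the same order as the main term $\tfrac{k-1}{12}\cdot\#(A/\pm 1)\approx m/24$. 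One must either bound it by the equidistribution of $Q(\gamma)=-k^2/m$ (it is essentially $L(1,\chi)\sqrt m$, i.e.\ a class number), or eliminate it by subtracting the weight-$4$ instance of the formula, where it occurs with the same coefficient and where $\dim M_4(Q)\ge 1$ is known. Second, in weight $2$ the formula computes an Euler characteristic $\dim M_2(Q)-\dim S_0(-Q)$ (harmless, since $S_0=0$), but the passage from $\dim M_2(Q)$ to $\dim S_2(Q)$ by subtracting the number of isotropic orbits fails precisely when $m$ is a perfect square, and the square values $m=9,16,25$ sit at the boundary of the claimed range; you flag this, but it must actually be resolved, e.g.\ by a separate direct computation for those $m$. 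With those two points supplied your argument closes and yields the same thresholds, at the cost of a substantially longer verification than the citation the paper uses.
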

\begin{proof} Table 7 of \cite{BEF} lists the genus symbols of all discriminant forms of signature $0$ mod $8$ with at most four generators for which the space of weight two cusp forms vanishes. We only need to find the values of $m$ for which the discriminant form of $Q$ appears in their table.
\end{proof}

In particular, when $m \le 21$ or $m=25$, due to the lack of cusp forms the Hirzebruch-Zagier series $Q_{2,1/m,\beta}(\tau;Q)$ equals the Eisenstein series $E_2(\tau;Q)$ in which the coefficient of $q^n \mathfrak{e}_{\gamma}$ is a multiple of the twisted divisor sum $$\sigma_1(n d_{\gamma}^2,\chi_m) = \sum_{d | n d_{\gamma}^2} d \cdot \left( \frac{m}{nd_{\gamma}^2 / d} \right),$$ where $d_{\gamma}$ is the denominator of $\gamma$ (i.e. the smallest number such that $d_{\gamma} \gamma = 0$ in $A$) and $\chi_m = \left( \frac{m}{-} \right)$ is the quadratic character attached to $\mathbb{Q}(\sqrt{m})$; and these multiples are constant when $n$ is restricted to certain congruence classes. This leads to numerous identities relating class number sums of the form $\sum_{r \in \mathbb{Z} - \langle \gamma, \beta \rangle} H(4n - mr^2)$ (even in some cases where $n$ is not integral!) to twisted divisor sums. \\

The simplest identities arise by comparing the components of $\mathfrak{e}_0$ in both series and restricting to odd integers $n$ for which in addition $\chi_m(n) = -1$ (which is never true for $m = 1,4,9,16,25$); in these cases, the ``correction term" in $Q_{2,1/m,\beta}(\tau;Q)$ vanishes and its coefficient of $q^n \mathfrak{e}_0$ is $-12 \sum_{r \in \mathbb{Z}} H(4n - mr^2).$ This is then a constant multiple of $\sigma_1(n,\chi_m)$ depending on the remainder of $n$ mod $m$. The constant multiple can be computed by studying the formula of \cite{BK} carefully but it is easier to compute by plugging in just one value of $n$. We list the results one can obtain with this argument: \\

\noindent (1) $m = 5$: for $n \equiv 3,7 \, (\text{mod} \, 10)$, $$\sum_{r \in \mathbb{Z}} H(4n - 5r^2) = \frac{5}{3} \sigma_1(n,\chi_5).$$

\noindent (2) $m = 8$: for $n \equiv 3,5 \, (\text{mod} \, 8)$, $$\sum_{r \in \mathbb{Z}} H(4n - 2r^2) = \sum_{r \in \mathbb{Z}} H(4n - 8r^2) = \frac{7}{6} \sigma_1(n,\chi_8).$$

\noindent (3) $m = 12$: for $n \equiv 5 \, (\text{mod} \, 12)$, $$\sum_{r \in \mathbb{Z}} H(4n - 3r^2) = \sum_{r \in \mathbb{Z}} H(4n - 12r^2) = \sigma_1(n,\chi_{12}),$$ and for $n \equiv 7 \, (\text{mod} \, 12)$, $$\sum_{r \in \mathbb{Z}} H(4n - 3r^2) = \sum_{r \in \mathbb{Z}} H(4n - 12r^2) = \frac{5}{6} \sigma_1(n,\chi_{12}).$$

\noindent (4) $m = 13$: for $n \equiv 5,7,11,15,19,21 \, (\text{mod} \, 26)$, $$\sum_{r \in \mathbb{Z}} H(4n - 13r^2) = \sigma_1(n,\chi_{13}).$$

\noindent (5) $m=17$: for $n \equiv 3,5,7,11,23,27,29,31 \, (\text{mod} \, 34)$, $$\sum_{r \in \mathbb{Z}} H(4n - 17r^2) = \frac{2}{3} \sigma_1(n,\chi_{17}).$$

\noindent (6) $m = 20$: for $n \equiv 3,7,13,17 \, (\text{mod} \, 20)$, $$\sum_{r \in \mathbb{Z}} H(4n - 20r^2) = \frac{2}{3} \sigma_1(n,\chi_{20}).$$

\noindent (7) $m = 21$: for $n \equiv 11,23,29 \, (\text{mod} \, 42)$, $$\sum_{r \in \mathbb{Z}} H(4n - 21r^2) = \sigma_1(n,\chi_{21})$$ and for $n \equiv 13,19,31 \, (\text{mod} \, 42)$, $$\sum_{r \in \mathbb{Z}} H(4n - 21r^2) = \frac{2}{3} \sigma_1(n,\chi_{21}).$$

\begin{rem} There are some values of $m$ where the Eisenstein series does not equal the Hirzebruch-Zagier series (one should not expect it to when $\mathrm{dim}\, S_2(Q) > 0$) but where one can still obtain some information by comparing coefficients within arithmetic progressions, yielding more identities than those above. (Note that if $f(\tau) = \sum_{n \in \mathbb{Z}} c(n) q^n$ is a modular form of some level $N$, then restricting to an arithmetic progression produces a modular form $\sum_{n \equiv r \, (\text{mod} \, d)} c(n) q^n$ of the same weight and level $Nd^2$; so one can always check whether the coefficients of two modular forms agree in an arithmetic progression by computing finitely many coefficients.) In particular, we do not claim that the list of $m$ above where $\sum_{r \in \mathbb{Z}} H(4n - mr^2)$ can be related to $\sigma_1(n,\chi_m)$ is complete. The vector-valued setting is useful here because it lowers the Sturm bound considerably. \\

Some examples of this occur when $m = 24,28,32,40$. For $m = 24$ it is not true that $E_2 = Q_{2,1/m,\beta}$; however, the $\mathfrak{e}_0$-components of these series have the same coefficients of $q^n$ when $n \equiv 5,7\, (\bmod \, 8)$. This can be proved by writing $f = E_2 - Q_{2,1/m,\beta}$ and considering the form $\sum_{k \in \mathbb{Z}/(24 \cdot 8)\mathbb{Z}} f(\tau + k/8) \mathbf{e}(3k/8)$, which is a modular form for (a representation of) $\Gamma_1(64)$ all of whose coefficients vanish except for those of $q^n \mathfrak{e}_0$ with $n \equiv 5$ mod $8$. To check that it vanishes identically we consider coefficients $n \equiv 5 \, (8)$ up to the Sturm bound $\frac{2}{12} [SL_2(\mathbb{Z}) : \Gamma_1(64)] = 512;$ this was done in SAGE. The case $n \equiv 7$ mod $8$ is similar. Specializing to the $n$ with $\left( \frac{24}{n} \right) = -1$, we obtain $$\sum_{r \in \mathbb{Z}} H(4n - 6r^2) = \sum_{r \in \mathbb{Z}} H(4n - 24r^2) = \frac{1}{2} \sigma_1(n,\chi_{24})$$ for all $n \equiv 7,13\, (\text{mod} \, 24)$.\\ 

For $m = 28$ the series have the same coefficients when $n \equiv 3,5,6 \, (\text{mod} \, 7)$, and specializing to the $n$ with $\left( \frac{28}{n} \right) = -1$ gives $$\sum_{r \in \mathbb{Z}} H(4n - 7r^2) = \sum_{r \in \mathbb{Z}} H(4n - 28r^2) = \frac{1}{2} \sigma_1(n,\chi_{28})$$ for all $n \equiv 5,13,17 \, (\text{mod} \, 28).$ \\ For $m=32$ it is not true that $E_2 = Q_{2,1/m,\beta};$ however, the $\mathfrak{e}_0$-components are the same, and we find $$\sum_{r \in \mathbb{Z}} H(4n - 32r^2) = \frac{1}{2} \sigma_1(n,\chi_{32}), \; n \equiv 1 \, (\text{mod}\, 4), \; \; \sum_{r \in \mathbb{Z}} H(4n - 32r^2) = \frac{2}{3} \sigma_1(n,\chi_{32}), \; n \equiv 3 \, (\text{mod}\, 8)$$ by considering odd $n$ for which $32r^2 - 4n = a^2$ is unsolvable in integers $(a,r)$. \\ For $m = 40$ the series have the same coefficients when $n \equiv 3,5 \, (\bmod\, 8)$, and specializing to the $n$ with $\left( \frac{40}{n} \right) = -1$ gives $$\sum_{r \in \mathbb{Z}} H(4n - 10r^2) = \sum_{r \in \mathbb{Z}} H(4n - 40r^2) = \frac{1}{2} \sigma_1(n,\chi_{40})$$ for all $n \equiv 11,19,21,29 \, (\text{mod} \, 40).$ \\

However, there do not seem to be any such relations of this type for $m = 44$ (or indeed for ``most" large enough $m$); in particular, $\sum_{r \in \mathbb{Z}} H(4n -11r^2)$ is not obviously related to a twisted divisor sum within any congruence class mod $44$. 
\end{rem}

\section{Restricted sums of class numbers}

Restrictions of the sums that occur in the Kronecker-Hurwitz relation to congruence classes, i.e. sums of the form $$\sum_{r \equiv a \, (d)} H(4n - r^2),$$ have been evaluated in \cite{BK2}, \cite{six} for $d=2,3,5,7$, where identities are obtained for all $a$ and $d=2,3,5$ and for some $a$ when $d=7$. These identities can be derived from the fact that the Hirzebruch-Zagier series equals the Eisenstein series when $m=4,9,25$ and that some coefficients agree when $m=49$. Here we need to compare coefficients of components $\mathfrak{e}_{\gamma}$ with $Q(\gamma) \in \mathbb{Z}$ but $\gamma$ not necessarily zero. \\

We illustrate this in the case $m=25$ and quadratic form $Q(x,y) = x^2 + xy - 6y^2$. The elements $\gamma \in A$ of the associated discriminant group with $Q(\gamma) \in \mathbb{Z}$ are represented by $$\gamma = (0,0), (1/5,3/5),(2/5,1/5),(3/5,4/5),(4/5,2/5) \in (\mathbb{Q}/\mathbb{Z})^2$$ and their products with the element $\beta = (-1/25,2/25) \in A$ we fix with $Q(\beta) = 1 - \frac{1}{25}$ are respectively $$\langle \gamma, \beta \rangle = 0,2/5,4/5,1/5,3/5.$$ In view of the formula \cite{BK}, the coefficient $c(n,\gamma)$ of $q^n \mathfrak{e}_{\gamma}$ in the Eisenstein series for $5 \nmid n$ is a multiple of the divisor sum $\sigma_1(n)$ depending on a local factor at $5$, and therefore on $n \, \text{mod} \, 5$ and on $\gamma$: $$c(n,\gamma) = \begin{cases} -6 \sigma_1(n): & n \equiv 1,4 \, (5), \; \gamma = 0 \; \text{or} \; n \equiv 3 \, (5), \; \gamma = \pm (2/5,1/5) \; \text{or} \; n \equiv 2 \, (5), \; \gamma = \pm (1/5,3/5); \\ -4 \sigma_1(n): & n \equiv 2,3 \, (5), \; \gamma = 0 \; \text{or} \; n \equiv 1,2 \, (5), \; \gamma = \pm  (2/5,1/5) \; \text{or} \; n \equiv 3,4 \, (5), \; \gamma = \pm (1/5,3/5); \\ -5 \sigma_1(n): & n \equiv 1,4 \, (5), \; \gamma = \pm (1/5,3/5) \; \text{or} \; n \equiv 4 \, (5), \; \gamma = \pm (2/5,1/5). \end{cases}$$

Since there are no cusp forms, $c(n,\gamma)$ equals the coefficient \begin{align*} &\quad -12 \sum_{r \in \mathbb{Z} - \langle \gamma, \beta \rangle} H(4n - 25r^2) - 30 \sum_{\substack{r \in \mathbb{Z} - \langle \gamma, \beta \rangle \\ 25r^2 - 4n = \square}} \Big( |r| - \sqrt{r^2 - 4n / 25} \Big) + \begin{cases} 12 \sqrt{n}: & \exists r \in \mathbb{Z} - \langle \gamma, \beta \rangle \, \text{with} \, 25r^2 = 4n; \\ 0: & \text{else}; \end{cases} \\ &= -12 \sum_{r \equiv 5 \langle \gamma, \beta \rangle \, \text{mod} \, 5} H(4n - r^2) - 6 \sum_{\substack{r \equiv 5 \langle \gamma, \beta \rangle \\ r^2 - 4n = \square}} \Big( |r| - \sqrt{r^2 - 4n} \Big) + \begin{cases} 12 \sqrt{n}: & \exists r \equiv 5 \langle \gamma, \beta \rangle \, \text{mod} \, 5 \; \text{with} \, r^2 = 4n; \\ 0: & \text{else}; \end{cases} \end{align*} of the Hirzebruch-Zagier series. Here $\frac{1}{2}(|r| - \sqrt{r^2 - 4n})$, $r \equiv \pm 5 \langle \gamma, \beta \rangle \, \text{mod}\, 5$ runs through the values $\min(d,n/d)$ for divisors $d$ of $n$ with $|r| = d + n/d \equiv \pm 5 \langle \gamma, \beta \rangle$, but it double-counts $\sqrt{n}$ if that occurs at all; so we can rewrite this as $$-12 \sum_{r \equiv 5 \langle \gamma, \beta \rangle \, \text{mod} \, 5} H(4n - r^2) - 12\varepsilon_{\gamma} \times \sum_{\substack{d | n \\ d + n/d \equiv \pm 5 \langle \gamma, \beta \rangle}} \min(d,n/d),$$ where $\varepsilon_{\gamma} = 1$ if $\gamma = 0$ and $\varepsilon_{\gamma} = \frac{1}{2}$ otherwise.

This yields the following formulas:

\begin{prop} For any $5 \nmid n$ and $a \in \mathbb{Z}/5\mathbb{Z}$, \begin{align*} &\quad \sum_{r \equiv a \, (5)} H(4n - r^2) + \varepsilon_a \sum_{\substack{d | n \\ d + n/d \equiv \pm a \, (5)}} \min(d,n/d)  \\ &= \begin{cases} \frac{1}{2} \sigma_1(n): & (n,a) \equiv (\pm 1,0), (3,\pm 1), (2,\pm 2) \, \mathrm{mod} \, 5; \\ \frac{1}{3} \sigma_1(n): & (n,a) \equiv (\pm 2,0), (1,\pm 1), (2,\pm 1), (3,\pm 2), (4,\pm 2) \, \mathrm{mod} \, 5; \\ \frac{5}{12} \sigma_1(n): & (n,a) \equiv (4,\pm 1), (1, \pm 2) \, \mathrm{mod} \, 5; \end{cases} \end{align*} where $\varepsilon_a = 1$ if $a = 0$ and $\varepsilon_a = 1/2$ otherwise.
\end{prop}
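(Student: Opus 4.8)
The Proposition is essentially a reindexing of the master identity derived just above the statement, specialized to the components $\mathfrak{e}_\gamma$ with $Q(\gamma)\in\mathbb Z$ (so that $n$ is an integer and $H(4n-r^2)$ is the classical Hurwitz sum). The idea is: since $m=25$ lies in the list of Lemma~1(i) where $S_2(Q)=0$, the Hirzebruch–Zagier series $Q_{2,1/25,\beta}(\tau;Q)$ coincides with the Eisenstein series $E_2(\tau;Q)$, so the two coefficient formulas for $c(n,\gamma)$ displayed above are genuinely equal. The left-hand side (Eisenstein) is the explicit multiple of $\sigma_1(n)$ from the three-case table, and the right-hand side (Hirzebruch–Zagier) is $-12\sum_{r\equiv 5\langle\gamma,\beta\rangle}H(4n-r^2)-12\varepsilon_\gamma\sum_{d\mid n,\,d+n/d\equiv\pm5\langle\gamma,\beta\rangle}\min(d,n/d)$. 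Dividing by $-12$ turns the equality into the stated formula.

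\textbf{First step: translate $\gamma$ into $a$.} I would set up the dictionary between the five relevant classes $\gamma$ and the residues $a=5\langle\gamma,\beta\rangle\in\mathbb Z/5\mathbb Z$, reading off from the displayed list that $\gamma=(0,0),(1/5,3/5),(2/5,1/5),(3/5,4/5),(4/5,2/5)$ correspond to $\langle\gamma,\beta\rangle=0,2/5,4/5,1/5,3/5$, hence to $a\equiv 0,2,4,1,3\pmod 5$ respectively. Because the coefficients of any modular form for this Weil representation satisfy $c(n,\gamma)=c(n,-\gamma)$, the congruence $r\equiv 5\langle\gamma,\beta\rangle$ and its negative $r\equiv -5\langle\gamma,\beta\rangle$ both contribute, which is exactly what produces the $\pm a$ in the sums and the factor $\varepsilon_a$ (with $\varepsilon_a=1$ only for $a=0$, where the two congruences coincide). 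I would then verify that as $\gamma$ runs through the five classes, $a$ runs through all of $\mathbb Z/5\mathbb Z$, so the Proposition's ``for any $a$'' is covered.

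\textbf{Second step: match the three cases.} The real bookkeeping is to check that the three-case table for $c(n,\gamma)$ (with values $-6\sigma_1(n)$, $-4\sigma_1(n)$, $-5\sigma_1(n)$, keyed to $n\bmod 5$ and $\gamma$) reindexes correctly, under $\gamma\mapsto a$ and the identification $n\equiv -1\equiv 4$, into the three cases $\tfrac12\sigma_1(n)$, $\tfrac13\sigma_1(n)$, $\tfrac{5}{12}\sigma_1(n)$ keyed to the pairs $(n,a)\bmod 5$ listed in the statement. For instance $-6\sigma_1(n)/(-12)=\tfrac12\sigma_1(n)$, and the case $n\equiv 1,4,\ \gamma=0$ becomes $(n,a)\equiv(\pm1,0)$; one traces each of the three rows of the table through the dictionary and confirms the six-or-so pairs in each row of the Proposition. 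This is the step where an error is most likely to creep in, since it is pure case-chasing across the $n\bmod 5$ and $\pm\gamma$ symmetry; I would organize it as a single table comparing the two keyings rather than checking pairs one at a time.

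\textbf{Where the content lives.} No new analysis is needed: the substantive input — the coefficient formula for $Q_{2,1/m,\beta}$ from \cite{W2}, the evaluation of the Zagier–Eisenstein coefficients as $H(4n-mr^2)$, the rewriting of $\tfrac12(|r|-\sqrt{r^2-4n})$ as $\min(d,n/d)$ over divisors with $d+n/d\equiv\pm a$ together with the double-counting of $\sqrt n$, and the vanishing $S_2(Q)=0$ forcing $Q_{2,1/25,\beta}=E_2$ — is all assembled in the paragraphs preceding the statement. So the ``proof'' is just the assertion that equating the two expressions for $c(n,\gamma)$ and dividing by $-12$ yields the claim, with the case-by-case verification of the $\sigma_1$-multiples left to direct inspection of \cite{BK}'s local factor at the prime $5$ (or, as the text suggests, pinned down by evaluating a single convenient value of $n$ in each class). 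The only genuine obstacle is clerical accuracy in the case-matching of the second step.
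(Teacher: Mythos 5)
Your proposal is correct and follows essentially the same route as the paper: the Proposition is obtained exactly by equating the Eisenstein coefficient table with the rewritten Hirzebruch--Zagier coefficient (valid since $S_2(Q)=0$ for $m=25$ by Lemma~1), translating $\gamma$ to $a=5\langle\gamma,\beta\rangle$, and dividing by $-12$. The only substance is the clerical case-matching you describe, which the paper likewise leaves to direct inspection.
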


\bibliographystyle{plainnat}
\bibliography{\jobname}
\end{document}